\documentclass[12pt]{article}
\usepackage{txfonts}
\usepackage{indentfirst}
\usepackage[colorlinks,
            linkcolor=blue,
            anchorcolor=green,
            citecolor=blue
            ]{hyperref}

\def\T{\Gamma} \def\D{\Delta} \def\Th{\Theta}
\def\Ld{\Lambda} \def\E{\Sigma} \def\O{\Omega}
\def\a{\alpha} \def\b{\beta} \def\g{\gamma} \def\d{\delta} \def\e{\varepsilon}
\def\r{\rho} \def\o{\sigma} \def\t{\tau} \def\w{\omega} \def\k{\kappa}
\def\th{\theta} \def\ld{\lambda} \def\ph{\varphi} \def\z{\zeta}
\def\A{$A$~} \def\G{$G$~} \def\H{$H$~} \def\K{$K$~} \def\M{$M$~} \def\N{$N$~}
\def\P{$P$~} \def\Q{$Q$~} \def\R{$R$~} \def\V{$V$~} \def\X{$X$~} \def\Y{$Y$~}
\def\rmA{{\bf A}} \def\rmD{{\bf D}} \def\rmS{{\bf S}} \def\rmK{{\bf K}}
\def\rmM{{\bf M}} \def\Z{{\Bbb Z}} \def\GL{{\bf GL}} \def\C{Cayley}
\def\oa{\ovl A} \def\og{\ovl G} \def\oh{\ovl H} \def\ob{\ovl B} \def\oq{\ovl Q}
\def\oc{\ovl C} \def\ok{\ovl K} \def\ol{\ovl L} \def\om{\ovl M} \def\on{\ovl N}
\def\op{\ovl P} \def\oR{\ovl R} \def\os{\ovl S} \def\ot{\ovl T} \def\ou{\ovl U}
\def\ov{\ovl V} \def\ow{\ovl W} \def\ox{\ovl X} \def\oT{\ovl\T}
\def\lg{\langle} \def\rg{\rangle}
\def\di{\bigm|} \def\Di{\Bigm|} \def\nd{\mathrel{\bigm|\kern-.7em/}}
\def\Nd{\mathrel{\not\,\Bigm|}} \def\edi{\bigm|\bigm|}
\def\m{\medskip} \def\l{\noindent} \def\x{$\!\,$}  \def\J{$-\!\,$}
\def\Hom{\hbox{\rm Hom}} \def\Aut{\hbox{\rm Aut}} \def\Inn{\hbox{\rm Inn}}
\def\Syl{\hbox{\rm Syl}} \def\Sym{\hbox{\rm Sym}} \def\Alt{\hbox{\rm Alt}}
\def\Ker{\hbox{\rm Ker}} \def\fix{\hbox{\rm fix}} \def\mod{\hbox{\rm mod}}
\def\psl{{\bf P\!SL}}    \def\sl{\hbox{\rm SL}}   \def\Cay{\hbox{\rm Cay}}
\def\val{\hbox{\rm Val}} \def\Sab{\hbox{\rm Sab}} \def\supp{\hbox{\rm supp}}
\def\qed{\hfill $\Box$}  \def\qqed{\qed\vspace{3truemm}}  \def\Mult{\hbox{\rm Mult}}
\def\CS{\Cay(G,S)} \def\CT{\Cay(G,T)}
\def\h{\heiti\bf} \def\hs{\ziti{E}\bf} \def\st{\songti} \def\ft{\fangsong}
\def\kt{\kaishu} \def\heit{\hs\relax} \def\songt{\st\rm\relax}
\def\fangs{\ft\rm\relax} \def\kaish{\kt\rm\relax} \def\fs{\footnotesize}
\begin{document}

\newtheorem{theorem}{Theorem}[section]
\newtheorem{corollary}[theorem]{Corollary}
\newtheorem{definition}[theorem]{Definition}
\newtheorem{conjecture}[theorem]{Conjecture}
\newtheorem{question}[theorem]{Question}
\newtheorem{lemma}[theorem]{Lemma}
\newtheorem{proposition}[theorem]{Proposition}
\newtheorem{example}[theorem]{Example}
\newtheorem{problem}[theorem]{Problem}
\newenvironment{proof}{\noindent {\bf
Proof.}}{\rule{3mm}{3mm}\par\medskip}
\newcommand{\remark}{\medskip\par\noindent {\bf Remark.~~}}
\newcommand{\pp}{{\it p.}}
\newcommand{\de}{\em}

\def\dfrac{\displaystyle\frac} \def\ovl{\overline}
\def\for{\forall~} \def\exi{\exists~} \def\c{\subseteq}
\def\iif{\Longleftrightarrow} \def\Rto{\Rightarrow} \def\Lto{\Leftarrow}
\def\T{\Gamma} \def\D{\Delta} \def\Th{\Theta}
\def\Ld{\Lambda} \def\E{\Sigma} \def\O{\Omega}
\def\a{\alpha} \def\b{\beta} \def\g{\gamma} \def\d{\delta} \def\e{\varepsilon}
\def\r{\rho} \def\o{\sigma} \def\t{\tau} \def\w{\omega} \def\k{\kappa}
\def\th{\theta} \def\ld{\lambda} \def\ph{\varphi} \def\z{\zeta}
\def\A{$A$~} \def\G{$G$~} \def\H{$H$~} \def\K{$K$~} \def\M{$M$~} \def\N{$N$~}
\def\P{$P$~} \def\Q{$Q$~} \def\R{$R$~} \def\V{$V$~} \def\X{$X$~} \def\Y{$Y$~}
\def\rmA{{\bf A}} \def\rmD{{\bf D}} \def\rmS{{\bf S}} \def\rmK{{\bf K}}
\def\rmM{{\bf M}} \def\Z{{\Bbb Z}} \def\GL{{\bf GL}} \def\C{Cayley}
\def\oa{\ovl A} \def\og{\ovl G} \def\oh{\ovl H} \def\ob{\ovl B} \def\oq{\ovl Q}
\def\oc{\ovl C} \def\ok{\ovl K} \def\ol{\ovl L} \def\om{\ovl M} \def\on{\ovl N}
\def\op{\ovl P} \def\oR{\ovl R} \def\os{\ovl S} \def\ot{\ovl T} \def\ou{\ovl U}
\def\ov{\ovl V} \def\ow{\ovl W} \def\ox{\ovl X} \def\oT{\ovl\T}
\def\lg{\langle} \def\rg{\rangle}
\def\di{\bigm|} \def\Di{\Bigm|} \def\nd{\mathrel{\bigm|\kern-.7em/}}
\def\Nd{\mathrel{\not\,\Bigm|}} \def\edi{\bigm|\bigm|}
\def\m{\medskip} \def\l{\noindent} \def\x{$\!\,$}  \def\J{$-\!\,$}
\def\Hom{\hbox{\rm Hom}} \def\Aut{\hbox{\rm Aut}} \def\Inn{\hbox{\rm Inn}}
\def\Syl{\hbox{\rm Syl}} \def\Sym{\hbox{\rm Sym}} \def\Alt{\hbox{\rm Alt}}
\def\Ker{\hbox{\rm Ker}} \def\fix{\hbox{\rm fix}} \def\mod{\hbox{\rm mod}}
\def\psl{{\bf P\!SL}} \def\Cay{\hbox{\rm Cay}} \def\Mult{\hbox{\rm Mult}}
\def\val{\hbox{\rm Val}} \def\Sab{\hbox{\rm Sab}} \def\supp{\hbox{\rm supp}}
\def\qed{\hfill $\Box$} \def\qqed{\qed\vspace{3truemm}}
\def\CS{\Cay(G,S)} \def\CT{\Cay(G,T)}
\def\h{\heiti\bf} \def\hs{\ziti{E}\bf} \def\st{\songti} \def\ft{\fangsong}
\def\kt{\kaishu} \def\heit{\hs\relax} \def\songt{\st\rm\relax}
\def\fangs{\ft\rm\relax} \def\kaish{\kt\rm\relax} \def\fs{\footnotesize}

\newcommand{\JEC}{{\it Europ. J. Combinatorics},  }
\newcommand{\JCTB}{{\it J. Combin. Theory Ser. B.}, }
\newcommand{\JCT}{{\it J. Combin. Theory}, }
\newcommand{\JGT}{{\it J. Graph Theory}, }
\newcommand{\ComHung}{{\it Combinatorica}, }
\newcommand{\DM}{{\it Discrete Math.}, }
\newcommand{\ARS}{{\it Ars Combin.}, }
\newcommand{\SIAMDM}{{\it SIAM J. Discrete Math.}, }
\newcommand{\SIAMADM}{{\it SIAM J. Algebraic Discrete Methods}, }
\newcommand{\SIAMC}{{\it SIAM J. Comput.}, }
\newcommand{\ConAMS}{{\it Contemp. Math. AMS}, }
\newcommand{\TransAMS}{{\it Trans. Amer. Math. Soc.}, }
\newcommand{\AnDM}{{\it Ann. Discrete Math.}, }
\newcommand{\NBS}{{\it J. Res. Nat. Bur. Standards} {\rm B}, }
\newcommand{\ConNum}{{\it Congr. Numer.}, }
\newcommand{\CJM}{{\it Canad. J. Math.}, }
\newcommand{\JLMS}{{\it J. London Math. Soc.}, }
\newcommand{\PLMS}{{\it Proc. London Math. Soc.}, }
\newcommand{\PAMS}{{\it Proc. Amer. Math. Soc.}, }
\newcommand{\JCMCC}{{\it J. Combin. Math. Combin. Comput.}, }
\newcommand{\GC}{{\it Graphs Combin.}, }

\title{Notes on efficient dominating sets and efficient open dominating sets in circulant graphs\thanks{
\,This research is supported by National Natural Science Foundation of China (Nos. 11401368 and 12326333)
}}
\author{
 Yun-Ping Deng$^{1}$,~~Xuanlong Ma$^{2,}$\footnote{Corresponding author}\\
{\small {\it $^{1}$Department of Mathematics, Shanghai University of Electric Power, Shanghai 201306, China}} \\
{\small  {\it$^{2}$School of Science, Xi'an Petroleum University, Xi'an 710065, China}}\\
 }
\date{}
\maketitle
\newcommand\blfootnote[1]{%
\begingroup
\renewcommand\thefootnote{}\footnote{#1}%
\addtocounter{footnote}{-1}%
\endgroup
}
 \begin{abstract}
Let $D$ be a proper subset of $\mathbb{Z}_n$ such that $|D|$ divides $n$ and is relatively
prime to $\frac{n}{|D|}$.
We first establish a necessary and sufficient condition for $D$ to be an efficient dominating set of some circulant graph $\Cay(\mathbb{Z}_n,S)$.
Moreover, for an inverse-closed subset $S$
of $\mathbb{Z}_n\setminus\{0\}$ such that $|S|+1$ divides $n$ and is relatively prime to $\frac{n}{|S|+1}$, we give a necessary and
sufficient condition for the existence of efficient dominating sets in $\Cay(\mathbb{Z}_n,S)$.
We also
obtain some results about the set of all efficient dominating sets
in any circulant graph. Similar results for efficient open
dominating sets are also obtained.
These results extend some theorems in [Discrete Math. 340 (2017) 1503--1507], [Discrete Math. 340
(2017) 1522--1527], and [Des. Codes Cryptogr. 94 (2026), 87].
\end{abstract}

\blfootnote{E-mail addresses: dyp612@hotmail.com (Deng),
xuanlma@xsyu.edu.cn (Ma)}
{{\bf Key words:} Circulant graph; Cayley graph; Efficient dominating set; Efficient open dominating set}

{{\bf AMS Classifications:} 05C25, 05C69}
\vskip 0.5cm

\section{Introduction}

Let $G$ be a finite group with identity element $e$ and $S$ an
inverse-closed subset of $G$ not containing $e.$ The {\it Cayley
graph} on $G$ with respect to the {\it connection set} $S$ is
defined as the graph with vertex set $G$ and edge set
$\{\{g,sg\}\colon\, g{\in} G,\ s{\in} S\}$ and denoted by
$\Cay(G,S).$ A Cayley graph on finite cyclic group
$\mathbb{Z}_n:=\{0,1,\cdots,n-1\}$ with addition modulo $n$ is
called a {\it circulant graph}.

A subset $D$ of vertices in a graph $\T$ is called an {\it efficient
dominating set} (also called a {\it perfect code}) if $D$ is an
independent set and each vertex not in $D$ is adjacent to exactly
one vertex in $D.$ By the definitions of a circulant graph and
an efficient dominating set, it is easy to see that if
$\Cay(\mathbb{Z}_n,S)$ admits an efficient dominating set $D,$ then
$n=(|S|+1)|D|.$ A subset $D$ of vertices in a graph $\T$ is called
an {\it efficient open dominating set} (also called a {\it total
perfect code}) if each vertex in $\T$ is adjacent to exactly one
vertex in $D.$ Similarly, one can easily see that if
$\Cay(\mathbb{Z}_n,S)$ admits an efficient open dominating set $D,$
then $n=|S||D|,$ and $D$ induces a matching in
$\Cay(\mathbb{Z}_n,S),$ which implies that $|D|$ is even.

The existence of efficient dominating sets in some families of graphs
was first studied by Biggs \cite{Biggs} and Kratochv\'{\i}l
\cite{Kratochvil}. It was proved in \cite{Lee} that a Cayley graph on an
abelian group admits an efficient dominating set if and only if it
is a covering graph of a complete graph. In \cite{Dejter}, a method
for constructing E-chains of Cayley graphs was given and was used to
construct infinite families of E-chains of Cayley graphs on
symmetric groups, where an {\it E-chain} is a countable family of
nested graphs each containing an efficient dominating set.
In \cite{Terada}, Terada showed that there exists no efficient
dominating set in any Cayley graph on special linear group
$\sl(2,2^f)~(f>1)$ with respect to a conjugation-closed connection
set.

During the past two decades, efficient dominating sets in circulant graphs
have attracted considerable attention. For example, Kumar and MacGillivray
\cite{Kumar} characterized the efficient dominating sets in
circulant graphs with domination number two and three, and Deng
\cite{Deng1} further characterized the existence and construction of
efficient dominating sets with domination number prime. In
\cite{Obradovic}, Obradovi\'{c}, Peters and Ru\v{z}i\'{c} gave
necessary and sufficient conditions for the existence of efficient
dominating sets in connected circulant graphs of degree $3$ and $4.$
Applying the tool of cyclotomic polynomials, Feng et al. \cite{Feng}
generalized the results in \cite{Obradovic} by considering the case
of degree $p^l-1$ ($p$ prime and $l$ positive integer). Almost at
the same time, using the subset direct sum method, Deng et al.
\cite{Deng2} independently gave necessary and sufficient conditions
for the existence of efficient dominating sets in connected
non-complete circulant graphs $\Cay(\mathbb{Z}_n,S)$ of degree
$|S|=p-1,pq-1$ and $p^m-1$ ($p,q$ primes and $m$ positive integer)
or order $n=p^kq,p^2q^2,pqr,p^2qr,pqrs$ ($p,q,r,s$ distinct primes
and $k$ positive integer) satisfying that $|S|+1$ divides $n$ and
$|S|+1$ is relatively prime to $\frac{n}{|S|+1}.$
Efficient open dominating sets have also received much attention in
recent years. In \cite[Corollary 3.5 (b)]{HHuang}, Huang et al. gave a
necessary and sufficient condition for a subgroup to be an efficient
open dominating set in circulant graphs. Feng et al. \cite{Feng}
gave a necessary and sufficient condition for the existence of
efficient open dominating sets in connected circulant graphs
$\Cay(\mathbb{Z}_n,S)$ of degree $|S|=p^l$ ($p$ prime and $l$
positive integer) satisfying that $|S|$ divides $n$ and $|S|$ is
relatively prime to $\frac{n}{|S|}$.
In terms of the order of a cyclic group and the cardinality of an efficient dominating set (resp. an efficient open dominating set), Feng et al. \cite{fengh} gave a necessary and sufficient condition  for the existence of efficient dominating sets (resp. efficient open dominating sets) in circulant graphs. Wang et al. \cite{W} studied efficient  dominating sets in circulant graphs of degree $p^{l}-1$, where $p$ is a prime and $l\ge 1$, which extended known results in \cite{Feng} and provided insight on the general
problem on the existence and structure of  efficient dominating sets in circulant graphs. Recently, Cameron et al. \cite{Cam} generalized some results in \cite{Feng} to general abelian groups.

In this note, we
generalize the results in \cite{Cam}, \cite{Deng2}, and \cite{Feng} by proving
that the similar result holds for any circulant graph
$\Cay(\mathbb{Z}_n,S)$ of general degree $|S|$ and general order $n$
satisfying that $|S|+1$ divides $n$ and $|S|+1$ is relatively prime
to $\frac{n}{|S|+1}.$ Furthermore, for any circulant graph which
admits an efficient dominating set, the present paper gives some
results about the set of all its efficient dominating sets.
We further give a necessary
and sufficient condition for a general subset to be an efficient
dominating set in circulant graphs. Similar results for efficient open
dominating sets are also obtained.

\section{Preliminaries}

Let $G$ be a finite abelian group with additive notation, and let $M,N$ be two subsets of \G. If every $g\in G$ has a unique representation in the form $g=m+n$ with $m\in M, n\in N$, then \G is called
the {\it direct sum} of $M$ and $N$.
In this case, we write
$G=M\oplus N$.
For two positive integers $m,n$,
we use $(m,n)$ to denote the greatest common divisor of $m$ and $n$. If $(m,n)=1$, then $m$ and $n$ are relatively prime.

\begin{proposition}
{\rm (\cite[Proposition~2.1]{Vuza})}\label{pr-2.1}
 Let \G be a finite abelian group and let $M,N$ be subsets of \G.
Then $G=M\oplus N$ is equivalent to the conjunction of any two of the following conditions:
\begin{itemize}
  \item[{\rm (i)}] $G=M+N$;
  \item[{\rm (ii)}] $(M-M)\cap(N-N)=\{0\}$;
  \item[{\rm (ii)}] $|G|=|M||N|$.
\end{itemize}
\end{proposition}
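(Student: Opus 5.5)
Throughout, let $\phi\colon M\times N\to G$ be the addition map $\phi(m,n)=m+n$. By definition, $G=M\oplus N$ means exactly that $\phi$ is a bijection. Conditions (i), (ii) and (iii) then translate into properties of $\phi$ as follows.

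Condition (i) says that $\phi$ is surjective. Condition (ii) says that $\phi$ is injective. Indeed, $\phi(m,n)=\phi(m',n')$ is equivalent to $m-m'=n'-n$. This common element lies in $(M-M)\cap(N-N)$, so if that intersection is $\{0\}$ then $m=m'$ and $n=n'$. Conversely, suppose $\phi$ is injective and $x=m-m'=n'-n$ lies in the intersection. Then $m+n=m'+n'$, hence $(m,n)=(m',n')$ and $x=0$. Condition (iii) says that $|M\times N|=|G|$, since $|M\times N|=|M||N|$.

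First, assume $G=M\oplus N$. Then $\phi$ is a bijection, so all three conditions hold, and in particular any two of them hold.

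Conversely, we check the three possible pairs.
\begin{itemize}
\item If (i) and (ii) hold, then $\phi$ is surjective and injective, hence bijective.
\item If (i) and (iii) hold, then $\phi$ is a surjection between finite sets of equal cardinality. It is therefore injective as well, hence bijective.
\item If (ii) and (iii) hold, then $\phi$ is an injection between finite sets of equal cardinality. It is therefore surjective as well, hence bijective.
\end{itemize}
In each case $G=M\oplus N$.

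The only step with any content is the equivalence between (ii) and injectivity. This is a direct rearrangement of $m+n=m'+n'$ and uses only that $G$ is abelian. There is no real obstacle; the rest is elementary counting on finite sets.
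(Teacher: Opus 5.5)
Your argument is correct. The paper gives no proof of its own here; it only quotes the result from Vuza. Your route is the standard proof: $G=M\oplus N$ means exactly that the addition map $\phi\colon M\times N\to G$ is a bijection, (i) is surjectivity, (ii) is injectivity, (iii) says the two finite sets have equal size, and any two of these force bijectivity by counting.

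There is one small imprecision. Condition (ii) is equivalent to injectivity of $\phi$ only when $M$ and $N$ are nonempty. If $M=\emptyset$, then $\phi$ is vacuously injective, but $(M-M)\cap(N-N)=\emptyset\neq\{0\}$. So in the step $G=M\oplus N\Rightarrow$ (ii), you should add that $M,N\neq\emptyset$ because $0\in G$ has a representation $0=m+n$. This is a one-line fix and does not affect the rest of the argument.
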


If $S$ is a subset of a finite abelian group $G$, then we write $S_0=S\cup\{0\}$.

\begin{lemma} {\rm (\cite[Lemma~2.3]{Deng1})}
\label{yl-2.2}
A subset $D$ of finite abelian group $G$ is an efficient dominating
set of $\Cay(G,S)$ if and only if $G=S_0\oplus D$.
\end{lemma}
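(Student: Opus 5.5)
We prove both directions directly from the definitions, using that every element of $S_0=S\cup\{0\}$ acts as a translation in the abelian group $G$. By definition, the closed neighbourhood of a vertex $d$ in $\Cay(G,S)$ is $N[d]=\{s+d : s\in S_0\}=S_0+d$, since $G$ is abelian and the edges are $\{g,s+g\}$. So the whole statement reduces to rephrasing ``efficient dominating set'' as: the closed neighbourhoods $N[d]$, $d\in D$, partition $G$.

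\textbf{First step: the reformulation.} Suppose $D$ is an efficient dominating set. Every $g\notin D$ is adjacent to exactly one $d\in D$, so $g\in N[d]$ for exactly one $d$. If some $d'\in D$ with $d'\ne d$ also had $g\in N[d']$, then either $g=d'\in D$, contradicting $g\notin D$, or $g$ would be adjacent to both $d$ and $d'$, contradicting uniqueness. Every $g\in D$ lies in $N[g]$. It lies in no other $N[d']$, because that would make $g$ and $d'$ adjacent, contradicting independence. Hence each $g\in G$ lies in exactly one $N[d]$. Conversely, suppose the sets $N[d]$, $d\in D$, partition $G$. Two distinct adjacent elements $d,d'\in D$ would give $d'\in N[d]\cap N[d']$, so $D$ is independent. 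A vertex $g\notin D$ lies in exactly one $N[d]$, and since $g\ne d$ this means $g$ is adjacent to exactly that $d$. So $g$ is adjacent to exactly one vertex of $D$.

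\textbf{Second step: translating into a direct sum.} The statement ``each $g\in G$ lies in exactly one $S_0+d$'' needs two facts, and only the second takes care.
\begin{itemize}
\item[(a)] Existence of $d$ means every $g$ can be written as $g=s+d$ with $s\in S_0$, $d\in D$.
\item[(b)] Uniqueness of $d$ must also give uniqueness of the pair $(s,d)$. Once $d$ is fixed, $s=g-d$ is determined, so uniqueness of $d$ is equivalent to uniqueness of the whole representation $g=s+d$.
\end{itemize}
Together, (a) and (b) are exactly the definition of $G=S_0\oplus D$.

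Alternatively, the direct-sum side can be checked with Proposition~\ref{pr-2.1}, using conditions (i) $G=S_0+D$ and (ii) $(S_0-S_0)\cap(D-D)=\{0\}$. Condition (ii) says precisely that $d-d'\notin S$ for distinct $d,d'\in D$ (independence) and that there are no $s\ne s'$ with $s-s'=d-d'$ (no vertex has two neighbours in $D$). This route is optional; the direct argument above suffices. No step is a genuine obstacle: the only point needing care is the case analysis $g\in D$ versus $g\notin D$ in the first step.
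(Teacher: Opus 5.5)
Your proof is correct. The paper gives no proof of its own: it quotes the lemma from \cite{Deng1} and treats the open analogue, Lemma~\ref{yl-2.6}, as immediate from the definitions. Your argument is exactly that standard definitional check: efficient domination means the closed neighbourhoods $S_0+d$, $d\in D$, partition $G$, which is verbatim the unique-representation definition of $S_0\oplus D$. One point should be stated explicitly. The identity $N[d]=S_0+d$ relies on the standing hypotheses $S=-S$ (the edge $\{d-s,d\}$ also makes $d-s$ a neighbour of $d$) and $0\notin S$, not only on $G$ being abelian.
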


For any subset $M$ of an abelian group $G,$ the set of all $g\in G$
satisfying $g+M=M$ is a subgroup of \G, called the {\it stability
subgroup} of $M,$ denoted by $G_M.$ It is easy to see that $M$ is a
union of cosets of $G_M.$ Moreover, if $0\in M,$ then $G_M\subseteq
M.$ A subset $M\subseteq G$ is called {\it periodic} if $G_M\neq
\{0\}.$ Otherwise, a subset $M\subseteq G$ is called {\it aperiodic}
if $G_M= \{0\}.$

\begin{lemma} {\rm (\cite[Lemma~2.4]{Deng1})} \label{yl-2.3}
Let $S$ and $D$ be two subsets of finite abelian group \G, and
let $A=G_{S_0}$. Then $G=S_0\oplus D$ if and only if
$G/A=S_0/A\oplus (D+A)/A$ and $A\cap(D-D)=\{0\}$.
\end{lemma}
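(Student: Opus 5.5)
We prove both directions by checking two of the three conditions of Proposition~\ref{pr-2.1}, namely (i) $G=M+N$ and the cardinality condition $|G|=|M||N|$. This works both in $G$ and in the quotient $G/A$.

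Two preliminary facts about $A=G_{S_0}$ will be used throughout.

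First, since $A+S_0=S_0$, the set $S_0$ is a union of cosets of $A$. Hence
$$|S_0|=|A|\,|S_0/A|.$$

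Second, since $0\in S_0$, we have $A\subseteq S_0$. Therefore every $a\in A$ can be written as $a=a-0$, which gives $A\subseteq S_0-S_0$.

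The last ingredient is the following claim: the condition $A\cap(D-D)=\{0\}$ is equivalent to the natural map $D\to (D+A)/A$, $d\mapsto d+A$, being injective. Indeed, $d+A=d'+A$ exactly when $d-d'\in A$. So under this condition,
$$|(D+A)/A|=|D|.$$

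\emph{Forward direction.} Assume $G=S_0\oplus D$. By Proposition~\ref{pr-2.1}, $(S_0-S_0)\cap(D-D)=\{0\}$. Since $A\subseteq S_0-S_0$, this gives $A\cap(D-D)=\{0\}$, and hence $|(D+A)/A|=|D|$.

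Next, $G=S_0+D$ implies, after reducing modulo $A$, that $G/A=S_0/A+(D+A)/A$.

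Finally, count:
$$|G/A|=\frac{|G|}{|A|}=\frac{|S_0|\,|D|}{|A|}=|S_0/A|\,|(D+A)/A|.$$
Conditions (i) and (iii) of Proposition~\ref{pr-2.1} hold in $G/A$, so $G/A=S_0/A\oplus (D+A)/A$.

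\emph{Backward direction.} Assume $G/A=S_0/A\oplus (D+A)/A$ and $A\cap(D-D)=\{0\}$.

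For covering, take any $g\in G$. We can write $g+A=(s+A)+(d+A)$ with $s\in S_0$ and $d\in D$. Then $g=s+d+a$ for some $a\in A$. Since $A+S_0=S_0$, we have $s+a\in S_0$. Thus $G=S_0+D$.

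For counting, the injectivity claim and the preliminary fact give
$$|G|=|A|\,|G/A|=|A|\,|S_0/A|\,|(D+A)/A|=|S_0|\,|D|.$$
Proposition~\ref{pr-2.1} now yields $G=S_0\oplus D$.

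The only delicate point is the cardinality bookkeeping. It hinges on two facts: that $S_0$ is a union of $A$-cosets, and that $A\cap(D-D)=\{0\}$ is exactly injectivity of $D\to (D+A)/A$. Once both are in place, each direction reduces to verifying two of the three conditions of Proposition~\ref{pr-2.1}.
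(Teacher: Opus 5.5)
Your proof is correct and complete. The paper gives no proof of this lemma, since it is quoted from Deng's earlier paper, so there is no in-paper argument to compare against. Your route is the natural self-contained one: you check the covering condition and the cardinality condition of Proposition~\ref{pr-2.1} in both $G$ and $G/A$, using $|S_0|=|A|\,|S_0/A|$ and the fact that $A\cap(D-D)=\{0\}$ makes $d\mapsto d+A$ injective on $D$.

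Two small remarks.
\begin{enumerate}
\item Your stated ``equivalence'' between $A\cap(D-D)=\{0\}$ and injectivity needs $D\neq\emptyset$, because $D-D=\emptyset$ when $D=\emptyset$. This does not affect the proof: you only use the implication from the condition to injectivity. In the forward direction $D\neq\emptyset$ because $G=S_0+D$, so $0\in A\cap(D-D)$ and your inclusion $A\cap(D-D)\subseteq\{0\}$ is an equality.
\item The hypothesis $0\in S_0$ is not actually needed. For any $s\in S_0$ and $a\in A$, the identity $a=(a+s)-s$ already gives $A\subseteq S_0-S_0$. So the same argument proves the statement with $S_0$ replaced by any nonempty subset of $G$ and its stability subgroup.
\end{enumerate}
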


\begin{lemma} {\rm (\cite[Theorem~3.17]{Szabo})} \label{yl-2.4}
Let \G be a finite abelian group, and let $M,N$ be subsets of \G. If
$G=M\oplus N,$ then $G=kM\oplus N$ for any integer $k$ relatively
prime to $|M|,$ where $kM=\{km\colon\,m\in M\}.$
\end{lemma}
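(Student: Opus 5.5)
The plan is to work in the integral group ring $\mathbb{Z}[G]$ and reduce to the case where $k$ is a prime $p$ not dividing $|M|$. For a subset $X\subseteq G$, identify $X$ with the element $\sum_{x\in X}x$ of $\mathbb{Z}[G]$. For an integer $t$, write $X^{(t)}=\sum_{x\in X}(tx)$; this is a multiset version of $tX$ and may have coefficients larger than $1$. The hypothesis $G=M\oplus N$ says exactly that $M\cdot N=G$ in $\mathbb{Z}[G]$. Here $G$ on the right is the sum of all group elements, and we have $G\cdot g=G$ for every $g\in G$.

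\emph{Reductions.} We may assume $k>0$. Since $|M|$ divides $|G|$, replacing $k$ by $k+t|G|$ for a large $t$ changes neither the set $kM$ nor the value of $(k,|M|)$. If $|M|=1$ the claim is trivial, so we may assume $|M|>1$; then $k\neq 0$ is forced anyway. Factor $k=p_1\cdots p_r$ into primes, each of which satisfies $p_i\nmid |M|$. It then suffices to prove the prime case in the following strengthened form: $G=pM\oplus N$ and $|pM|=|M|$. Given this, we apply it successively to $M, p_1M, p_1p_2M,\dots$. The cardinality is preserved at each step, so the coprimality hypothesis persists, and we arrive at $G=kM\oplus N$.

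\emph{Prime case.} Multiply $M\cdot N=G$ by $M^{p-1}$. Using $M\cdot G=|M|\,G$ repeatedly, we get $M^p N=|M|^{p-1}G$ in $\mathbb{Z}[G]$. Since $G$ is abelian, the Frobenius identity in characteristic $p$ gives $M^p\equiv M^{(p)}\pmod p$ coefficientwise: expanding $(\sum x)^p$, all multinomial cross terms are divisible by $p$. Combined with Fermat's little theorem, this gives $M^{(p)}N\equiv |M|^{p-1}G\equiv G\pmod p$. Hence every coefficient of $M^{(p)}N$ is a nonnegative integer congruent to $1$ modulo $p$, and in particular it is at least $1$. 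On the other hand, the coefficients of $M^{(p)}N$ sum to $|M||N|=|G|$, and there are $|G|$ group elements. So every coefficient equals exactly $1$. This says precisely that the map $(m,n)\mapsto pm+n$ from $M\times N$ to $G$ is a bijection. Therefore $pM$ has $|M|$ distinct elements and $G=pM\oplus N$. Alternatively, once $|pM|=|M|$ is known, one can conclude via conditions (i) and (iii) of Proposition~\ref{pr-2.1}.

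The main obstacle is the prime step, and specifically seeing that the congruence modulo $p$ together with the cardinality count pins down all coefficients exactly. This count also handles the possibility that multiplication by $p$ is not injective on $M$, which is why I would work with the multiset $M^{(p)}$ rather than the set $pM$ throughout. The remaining reductions are routine bookkeeping.
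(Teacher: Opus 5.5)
The paper gives no proof of this lemma; it simply quotes it from Szab\'o (the result is usually attributed to Tijdeman), so there is no in-paper argument to compare against. Your proof is correct and complete, and it is the standard group-ring argument for this result.

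The reductions are sound. Shifting $k$ by a multiple of $|G|$ changes neither $kM$ nor $(k,|M|)$. Iterating over the prime factors of $k$ also works, because $|p_iM|=|M|$ keeps each later prime coprime to the current cardinality.

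The prime case is also sound. The identities $M^pN=|M|^{p-1}G$ and $M^p\equiv M^{(p)}$ modulo $p$ in $\mathbb{Z}[G]$ make every coefficient of $M^{(p)}N$ a positive integer. The augmentation count $|M||N|=|G|$ then forces each coefficient to equal $1$. This gives both injectivity of $m\mapsto pm$ on $M$ and $G=pM\oplus N$.

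Your ``strengthened'' conclusion $|pM|=|M|$ is actually automatic from $G=pM\oplus N$ by comparing cardinalities, but stating it explicitly does no harm. Including your argument would make the note independent of the citation. It also works verbatim for arbitrary finite abelian $G$, not only for the cyclic groups where the paper uses it.
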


The following result was first stated in \cite[Remark~1]{Obradovic} and also appears in \cite{Deng1}.

\begin{lemma} {\rm (\cite[Remark~1]{Obradovic})} \label{yl-2.5}
Let $n=|S_0|d,$ where $d$ is an integer. Then $\Cay(\mathbb{Z}_n,S)$
admits an efficient dominating set $D$ which is a coset of some
subgroup of $\mathbb{Z}_n$ if and only if $s-s'\nequiv
0~(mod~|S_0|)$ for any distinct $s,s'\in S_0,$ where
$S_0=S\cup\{0\}.$
\end{lemma}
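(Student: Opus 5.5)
The plan is to reduce everything to the unique subgroup of $\mathbb{Z}_n$ of order $d$ and then apply Lemma~\ref{yl-2.2} together with Proposition~\ref{pr-2.1}. Write $m=|S_0|$, so $n=md$. Since $m\mid n$, the residue of an element of $\mathbb{Z}_n$ modulo $m$ is well defined. Hence the condition ``$s-s'\not\equiv 0\ (\mathrm{mod}\ m)$'' makes sense. Let $H=\{0,m,2m,\dots,(d-1)m\}$ be the unique subgroup of $\mathbb{Z}_n$ of order $d$. An element $x\in\mathbb{Z}_n$ lies in $H$ if and only if $x\equiv 0\pmod m$.

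\emph{Necessity.} Suppose $D$ is an efficient dominating set of $\Cay(\mathbb{Z}_n,S)$ and $D=g+K$ for some subgroup $K$ and some $g\in\mathbb{Z}_n$. As noted in the introduction, $n=|S_0||D|$, so $|K|=|D|=d$. Since $\mathbb{Z}_n$ has a unique subgroup of each order dividing $n$, this gives $K=H$.

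By Lemma~\ref{yl-2.2} we have $\mathbb{Z}_n=S_0\oplus(g+H)$. Translating by $-g$ preserves both the covering property and the uniqueness of representations, so $\mathbb{Z}_n=S_0\oplus H$. Proposition~\ref{pr-2.1} then yields $(S_0-S_0)\cap(H-H)=\{0\}$. Since $H-H=H$, no difference $s-s'$ with $s\neq s'$ in $S_0$ lies in $H$. By the description of $H$ above, this says exactly that $s-s'\not\equiv 0\pmod m$.

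\emph{Sufficiency.} Take $D=H$, which is a coset of a subgroup. The hypothesis gives $(S_0-S_0)\cap(H-H)=(S_0-S_0)\cap H=\{0\}$. In addition, $|\mathbb{Z}_n|=md=|S_0||H|$. Proposition~\ref{pr-2.1}, using conditions (ii) and (iii), gives $\mathbb{Z}_n=S_0\oplus H$. Lemma~\ref{yl-2.2} then shows that $H$ is an efficient dominating set.

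No step is really an obstacle. The only points requiring care are two. First, the coset must have cardinality exactly $d$, which forces the subgroup to be $H$ by the cyclic-group uniqueness. Second, membership in $H$ must be identified with congruence to $0$ modulo $m$, which uses $m\mid n$.
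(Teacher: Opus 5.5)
Your proof is correct: identifying the coset's subgroup as the unique subgroup $|S_0|\mathbb{Z}_n$ of order $d$, and then passing between $\mathbb{Z}_n=S_0\oplus D$ and $(S_0-S_0)\cap |S_0|\mathbb{Z}_n=\{0\}$ via Proposition~\ref{pr-2.1} and Lemma~\ref{yl-2.2}, settles both directions. The paper gives no proof of this lemma (it cites Obradovi\'{c}--Peters--Ru\v{z}i\'{c}), but your argument is the same reasoning the paper itself uses in the proofs of Theorem~\ref{dl-3.1} and Theorem~\ref{dl-3.4}(i).
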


By the definitions of a Cayley graph and an efficient open dominating
set, similar to Lemma \ref{yl-2.2}, one can easily obtain the
following result.

\begin{lemma} \label{yl-2.6}
A subset $D$ of a finite abelian group $G$ is an efficient open
dominating set of $\Cay(G,S)$ if and only if $G=S\oplus D.$
\end{lemma}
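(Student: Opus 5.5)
The plan is to unwind the definition of an efficient open dominating set into a counting statement about representations $g=s+d$ and then recognize that statement as the direct-sum condition, following the same pattern as Lemma \ref{yl-2.2}.

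In $\Cay(G,S)$ the neighbours of a vertex $g$ are exactly the elements $s+g$ with $s\in S$. Since $S$ is inverse-closed, $g$ is adjacent to $d$ if and only if $g-d\in S$, that is, $g=s+d$ for some $s\in S$. Hence the number of vertices of $D$ adjacent to $g$ equals the number of pairs $(s,d)\in S\times D$ with $g=s+d$. Note that distinct $d$ give distinct pairs, and for each $d$ the element $s=g-d$ is determined.

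The equivalence now follows directly. $D$ is an efficient open dominating set precisely when every $g\in G$ has exactly one neighbour in $D$. By the count above, this says that every $g\in G$ has exactly one representation $g=s+d$ with $s\in S$ and $d\in D$. That is, by definition, $G=S\oplus D$, and both directions follow at once.

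There is no real obstacle. The only point needing care is using that $S$ is inverse-closed, so that adjacency is symmetric, and observing that the bijection between neighbours $d$ and pairs $(g-d,d)$ makes "exactly one neighbour" match "unique representation" exactly.
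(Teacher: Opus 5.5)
Your proof is correct and is essentially the paper's approach. The paper gives no separate argument and only says the lemma follows from the definitions, as in Lemma~\ref{yl-2.2}. Your observation that $g$ is adjacent to $d$ exactly when $g-d\in S$ is precisely what turns ``exactly one neighbour in $D$'' into ``a unique representation $g=s+d$ with $s\in S$, $d\in D$''.
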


Similar to Lemma \ref{yl-2.5}, we have the following result.

\begin{lemma} \label{yl-2.7}
Let $n=|S|d,$ where $d$ is an integer. Then $\Cay(\mathbb{Z}_n,S)$
admits an efficient open dominating set $D$ which is a coset of some
subgroup of $\mathbb{Z}_n$ if and only if $s-s'\nequiv 0~(mod~|S|)$
for any distinct $s,s'\in S.$
\end{lemma}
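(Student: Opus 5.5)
By Lemma~\ref{yl-2.6}, a subset $D\subseteq \mathbb{Z}_n$ is an efficient open dominating set of $\Cay(\mathbb{Z}_n,S)$ if and only if $\mathbb{Z}_n=S\oplus D$. I will prove Lemma~\ref{yl-2.7} by taking $D$ to be a coset of a subgroup and translating this direct-sum condition into the congruence condition on $S$. The argument mirrors Lemma~\ref{yl-2.5}, with $S_0$ replaced by $S$ and Lemma~\ref{yl-2.2} replaced by Lemma~\ref{yl-2.6}.

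\textbf{Reducing to a subgroup.} Translates of a direct-sum factor are again factors: if $\mathbb{Z}_n=S\oplus D$, then $\mathbb{Z}_n=S\oplus(D+g)$ for every $g$. So we may assume $D=H$ is a subgroup. For a direct sum we need $|S||H|=n$, hence $|H|=n/|S|=d$. In the cyclic group $\mathbb{Z}_n$ the unique subgroup of order $d$ is
\[
H=\langle |S|\rangle=\{0,|S|,2|S|,\dots\}=\{x\in\mathbb{Z}_n : x\equiv 0 \ (\mathrm{mod}\ |S|)\}.
\]
This description makes sense because $|S|$ divides $n$.

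\textbf{Applying Proposition~\ref{pr-2.1}.} Take $M=S$ and $N=H$. Condition (iii), $|G|=|M||N|$, holds automatically. Since $H-H=H$, condition (ii) says that $(S-S)\cap H=\{0\}$. This means that for distinct $s,s'\in S$ the element $s-s'$ is not a multiple of $|S|$ in $\mathbb{Z}_n$. Because $|S|$ divides $n$, congruence modulo $|S|$ is well defined on $\mathbb{Z}_n$, so this is exactly the condition $s-s'\not\equiv 0 \pmod{|S|}$.

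\textbf{Both directions.} If $\Cay(\mathbb{Z}_n,S)$ has an efficient open dominating set that is a coset of a subgroup, the reduction above gives $\mathbb{Z}_n=S\oplus H$. Proposition~\ref{pr-2.1} then yields (ii), which is the congruence condition. Conversely, if the congruence condition holds, then (ii) and (iii) both hold for $S$ and $H$. Proposition~\ref{pr-2.1} gives $\mathbb{Z}_n=S\oplus H$, so $H$ is an efficient open dominating set by Lemma~\ref{yl-2.6}.

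The only step needing care is that a coset serving as a factor must have size exactly $d$, which forces it to be a coset of $\langle |S|\rangle$. This follows at once from $n=|S||D|$, so there is no real obstacle.
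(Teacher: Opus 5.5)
Your proof is correct and is essentially the argument the paper intends: the paper gives no separate proof beyond saying the lemma is ``similar to Lemma~\ref{yl-2.5}'', and the technique it uses for the analogous statements (Theorem~\ref{dl-3.1} and Theorem~\ref{dl-3.4}(i)) is exactly yours---take the subgroup $|S|\mathbb{Z}_n$ of order $d$ and apply Proposition~\ref{pr-2.1} via conditions (ii) and (iii), with Lemma~\ref{yl-2.6} in place of Lemma~\ref{yl-2.2}. The paper leaves implicit the forward-direction step that any coset factor is a translate of the unique subgroup of order $d$, and you handle that step correctly.
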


\section{Efficient dominating sets}

Clearly,  if
$\Cay(\mathbb{Z}_n,S)$ admits an efficient dominating set $D$,  then $n=(|S|+1)|D|$.
We first establish a necessary and sufficient condition for a proper subset $D$ of $\mathbb{Z}_n$ with $|D|\mid n$ and $(|D|,\frac{n}{|D|})=1$ to be an efficient dominating set of $\Cay(\mathbb{Z}_n,S)$.

\begin{theorem}\label{dl-3.1}
Let $D$ be a proper subset of $\mathbb{Z}_n$ such that $|D|$ divides $n$ and is relatively
prime to $\frac{n}{|D|}$. Then $D$ is an efficient dominating set of some circulant graph
$\Cay(\mathbb{Z}_n,S)$ if and only if $d-d'\nequiv
0~(mod~|D|)$ for distinct $d,d'\in D$.
\end{theorem}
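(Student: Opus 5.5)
The plan is to translate everything into direct sums via Lemma~\ref{yl-2.2}, and then to use the coprimality hypothesis through Lemma~\ref{yl-2.4}. Throughout, write $m=|D|$ and $k=n/m$, so that $(m,k)=1$. Since $D$ is proper, we have $k\ge 2$.

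\emph{Sufficiency.} Assume the elements of $D$ are pairwise incongruent modulo $m$. Take $H=\lg m\rg$, the subgroup of order $k$; then $S=H\setminus\{0\}$ is inverse-closed, nonempty, and $S_0=H$. Because $D$ consists of $m$ elements in distinct cosets of $H$ (cosets of $H$ correspond to residues mod $m$), every $g\in\mathbb{Z}_n$ is uniquely $h+d$ with $h\in H$, $d\in D$. Hence $\mathbb{Z}_n=S_0\oplus D$, and $D$ is an efficient dominating set of $\Cay(\mathbb{Z}_n,S)$ by Lemma~\ref{yl-2.2}. (Alternatively: $(D-D)\cap H=\{0\}$ together with $|H||D|=n$ and Proposition~\ref{pr-2.1}.) This direction does not need coprimality.

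\emph{Necessity.} Suppose $\mathbb{Z}_n=S_0\oplus D$ for some $S$, so $|S_0|=k$. The key step is to apply Lemma~\ref{yl-2.4} with the roles chosen so that the multiplier acts on $S_0$: since $(m,k)=1$, the integer $m$ is relatively prime to $|S_0|=k$. This gives $\mathbb{Z}_n=mS_0\oplus D$. A set that is a direct summand must have exactly $|S_0|$ elements in the sum, so $|mS_0|=k$. Moreover, $mS_0\subseteq m\mathbb{Z}_n=\lg m\rg=H$, which has order $k$. Therefore $mS_0=H$, and $\mathbb{Z}_n=H\oplus D$. By Proposition~\ref{pr-2.1}(ii), $(D-D)\cap H=\{0\}$, i.e.\ $d-d'\not\equiv 0 \pmod m$ for distinct $d,d'\in D$, which is the desired condition.

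The main obstacle, and the point where the hypothesis $(|D|,n/|D|)=1$ is essential, is the middle of the necessity argument. One must check that Lemma~\ref{yl-2.4} applies to the multiplier $m$ on the summand $S_0$ (coprimality with $|S_0|$, not with $|D|$). One must also verify that the dilated set $mS_0$ still has $k$ distinct elements. The latter is automatic from the direct-sum conclusion, since $|mS_0|\,|D|\ge n$ forces $|mS_0|=k$. These two points are what yield $mS_0$ equal to the whole subgroup $\lg m\rg$; the remaining steps are routine bookkeeping.
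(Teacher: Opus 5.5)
Your proof is correct. The sufficiency half matches the paper's, but your necessity argument dilates the other factor.

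The paper applies Lemma~\ref{yl-2.4} to $D$ with multiplier $|S_0|$, getting $\mathbb{Z}_n=S_0\oplus |S_0|D$. It then argues by contradiction: if $d\equiv d'\pmod{|D|}$ with $d\neq d'$, then $|S_0|d=|S_0|d'$ in $\mathbb{Z}_n$, so $|S_0|D$ would have fewer than $|D|$ elements.

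You instead dilate $S_0$ by $|D|$. You observe that $|D|S_0$ lies in the subgroup $\lg |D|\rg$ of order $|S_0|$ and therefore equals it, and you read the condition off $\mathbb{Z}_n=\lg |D|\rg\oplus D$ via Proposition~\ref{pr-2.1}.

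The paper's route is a shorter collapse argument. It is exactly its proof of Theorem~\ref{dl-3.2} with the roles of $S_0$ and $D$ exchanged.

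Yours gives a structural conclusion. Any $D$ that is an efficient dominating set of some circulant graph is a complement of $\lg |D|\rg$. Hence it is one for the specific graph $\Cay(\mathbb{Z}_n,\lg |D|\rg\setminus\{0\})$ used in the sufficiency part, so the two directions become visibly the same statement.

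Your route also shows that one application of Lemma~\ref{yl-2.4} suffices for two results. The map $x\mapsto |D|x$ has kernel $\lg |S_0|\rg$. So the equality $|D|S_0=\lg |D|\rg$ also says that $S_0$ meets each coset of $\lg |S_0|\rg$ exactly once, which is the necessity part of Theorem~\ref{dl-3.2}.
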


\begin{proof}
Suppose that $d-d'\nequiv 0~(mod~|D|)$ for distinct $d,d'\in D$. Then
$$(|D|\mathbb{Z}_n-|D|\mathbb{Z}_n)\cap(D-D)=\{0\}.$$
Note that the
cardinality of $|D|\mathbb{Z}_n$ is $\frac{n}{|D|}.$ Therefore, by
Proposition \ref{pr-2.1}, $\mathbb{Z}_n=|D|\mathbb{Z}_n\oplus D.$
Now let $S=|D|\mathbb{Z}_n\setminus\{0\}.$ Then $S$ is inverse-closed
and $\mathbb{Z}_n=S_0\oplus D,$ where $S_0=S\cup\{0\}.$ By Lemma~\ref{yl-2.2}, $D$ is an efficient dominating set in
$\Cay(\mathbb{Z}_n,S)$, as desired.

Conversely,
 suppose that $D$ is an efficient dominating set in $\Cay(\mathbb{Z}_n,S)$.
Then by Lemma~\ref{yl-2.2}, $\mathbb{Z}_n=S_0\oplus D,$ where
$S_0=S\cup\{0\}.$ Set $|S_0|=k.$ Then $n=k|D|$ and $k$ is relatively
prime to $|D|.$ It follows from Lemma~\ref{yl-2.4} that
$\mathbb{Z}_n=S_0\oplus kD,$ which implies that $|kD|=|D|.$ Now we
claim that $d-d'\nequiv 0~(mod~|D|)$ for any distinct $d,d'\in D.$
Otherwise, if there exist some distinct $d,d'\in D$ such that
$d-d'\equiv 0~(mod~|D|),$ then $kd-kd'\equiv 0~(mod~k|D|),$ that is,
$kd-kd'\equiv 0~(mod~n).$ Therefore, $kd$ and $kd'$ are the same
element in $\mathbb{Z}_n,$ and thus $|kD|<|D|,$ which is a
contradiction.
\end{proof}

If $S$ is an inverse-closed subset
of $\mathbb{Z}_n\setminus\{0\}$ such that $(|S|+1)\mid n$ and $(|S|+1,\frac{n}{|S|+1})=1$, then the following result gives
a necessary and sufficient condition for which $\Cay(\mathbb{Z}_n,S)$ admits an efficient dominating set.

\begin{theorem}\label{dl-3.2}
Let $S$ be an inverse-closed subset
of $\mathbb{Z}_n\setminus\{0\}$ such that $|S|+1$ divides $n$ and is relatively prime to $\frac{n}{|S|+1}$.  Then $\Cay(\mathbb{Z}_n,S)$ admits an efficient
dominating set if and only if $s-s'\nequiv 0~(mod~|S_0|)$ for
distinct $s,s'\in S_0.$
\end{theorem}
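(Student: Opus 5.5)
The plan is to mirror the proof of Theorem~\ref{dl-3.1}, with the roles of $S_0$ and $D$ interchanged. Write $k=|S_0|=|S|+1$ and $m=\frac{n}{k}$, so that $(k,m)=1$ by hypothesis.

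For sufficiency, suppose $s-s'\nequiv 0~(mod~k)$ for distinct $s,s'\in S_0$. Then Lemma~\ref{yl-2.5} applies directly: it yields an efficient dominating set of $\Cay(\mathbb{Z}_n,S)$, namely a coset of a subgroup. Concretely, I would take $D=k\mathbb{Z}_n$, which has order $m$. Since $D-D=k\mathbb{Z}_n$ and the hypothesis says $(S_0-S_0)\cap k\mathbb{Z}_n=\{0\}$, Proposition~\ref{pr-2.1} together with $|S_0||D|=km=n$ gives $\mathbb{Z}_n=S_0\oplus D$. Lemma~\ref{yl-2.2} then shows that $D$ is an efficient dominating set. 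Neither direction uses coprimality for this step.

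For necessity, suppose $D$ is an efficient dominating set. By Lemma~\ref{yl-2.2} we have $\mathbb{Z}_n=S_0\oplus D$, and hence $|D|=m$. Since $(m,|D|)=1$, Lemma~\ref{yl-2.4}, applied with $M=D$ and $N=S_0$ using commutativity of $\oplus$, gives $\mathbb{Z}_n=mD\oplus S_0$. But $mD\subseteq m\mathbb{Z}_n$, which is a subgroup of order $k$. Since $|mD|\cdot|S_0|=n$ forces $|mD|=m$, and $|m\mathbb{Z}_n|=k$, that route is awkward. Instead I would apply Lemma~\ref{yl-2.4} with $M=S_0$: since $(m,|S_0|)=(m,k)=1$, we get $\mathbb{Z}_n=mS_0\oplus D$, and therefore $|mS_0|=|S_0|=k$.

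Now argue by contradiction, as in Theorem~\ref{dl-3.1}. Suppose there are distinct $s,s'\in S_0$ with $s-s'\equiv 0~(mod~k)$. Then $ms-ms'\equiv 0~(mod~mk)$, that is, $ms=ms'$ in $\mathbb{Z}_n$. This gives $|mS_0|<k$, a contradiction.

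The main point to watch is choosing the correct multiplier. One must multiply $S_0$ by $m=\frac{n}{|S_0|}$, which is coprime to $|S_0|$, so that Lemma~\ref{yl-2.4} applies and collisions modulo $k$ become collisions modulo $n$. Beyond that, the argument is routine.
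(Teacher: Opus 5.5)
Your proof is correct and is essentially the paper's. Sufficiency comes straight from Lemma~\ref{yl-2.5}, and your explicit choice $D=k\mathbb{Z}_n$ is the subgroup that lemma provides. Necessity applies Lemma~\ref{yl-2.4} to $S_0$ with multiplier $|D|=\frac{n}{|S_0|}$, so that a collision modulo $|S_0|$ becomes a collision modulo $n$ and forces $|mS_0|<|S_0|$.

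You should simply delete the abandoned detour. Its premise $(m,|D|)=1$ is false whenever $|D|=m>1$: the multiplier coprime to $|D|$ would be $k$, and that is the route of Theorem~\ref{dl-3.1}, which constrains $D$ rather than $S_0$.
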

\begin{proof}
If $s-s'\nequiv 0~(mod~|S_0|)$ for distinct $s,s'\in S_0,$ then
by Lemma \ref{yl-2.5}, $\Cay(\mathbb{Z}_n,S)$ admits an efficient
dominating set.

For the converse,
suppose that $\Cay(\mathbb{Z}_n,S)$ admits an efficient dominating set $D$.
Then by Lemma \ref{yl-2.2}, $\mathbb{Z}_n=S_0\oplus D.$ Set $|D|=k.$
Then $n=|S_0|k$ and $k$ is relatively prime to $|S_0|.$ It follows
from Lemma \ref{yl-2.4} that $\mathbb{Z}_n=kS_0\oplus D,$ which
implies $|kS_0|=|S_0|.$ Now we claim that $s-s'\nequiv
0~(mod~|S_0|)$ for any distinct $s,s'\in S_0.$ Otherwise, if there
exist some distinct $s,s'\in S_0$ such that $s-s'\equiv
0~(mod~|S_0|),$ then $ks-ks'\equiv 0~(mod~k|S_0|),$ that is,
$ks-ks'\equiv 0~(mod~n).$ Therefore, $ks$ and $ks'$ are the same
element in $\mathbb{Z}_n,$ and thus $|kS_0|<|S_0|,$ which is a
contradiction.
\end{proof}

In Theorem~\ref{dl-3.2}, if $|S|=p^l-1$ with $p^l\mid n$ and $p^{l+1}\nmid n$ where $p$ is a prime and $l$ is a positive integer, then $(p^l,\frac{n}{p^l})=1$. Thus, Theorem~\ref{dl-3.2} implies \cite[Theorem~1.2]{Feng}.
Moreover, Theorem~\ref{dl-3.2} also generalizes
\cite[Theorem~3.5]{Deng2}.

Now by Lemma~\ref{yl-2.5} and Theorem~\ref{dl-3.2}, one can easily observe the following result.

\begin{corollary} \label{co-3.3}
Let $S$ be an inverse-closed
subset of $\mathbb{Z}_n\setminus\{0\}$ such that $|S|+1$ divides $n$ and is relatively prime to $\frac{n}{|S|+1}.$ Then
$\Cay(\mathbb{Z}_n,S)$ admits an efficient dominating set if and only if it admits an efficient dominating set that is a subgroup of $\mathbb{Z}_n$. In particular, if $\Cay(\mathbb{Z}_n,S)$ admits an efficient dominating set, then every coset of the unique subgroup of order $\frac{n}{|S|+1}$ in $\mathbb Z_n$ is an efficient dominating set.
\end{corollary}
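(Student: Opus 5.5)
The first equivalence follows by chaining Theorem~\ref{dl-3.2} with Lemma~\ref{yl-2.5}, and the ``in particular'' part will come from checking the direct sum decomposition of Lemma~\ref{yl-2.2} with Proposition~\ref{pr-2.1}. Write $k=|S_0|=|S|+1$ and $d=n/k$, so that $(k,d)=1$.

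\emph{Equivalence.} One direction is trivial: an efficient dominating set that is a subgroup is in particular an efficient dominating set. For the other direction, suppose $\Cay(\mathbb{Z}_n,S)$ admits some efficient dominating set. By Theorem~\ref{dl-3.2}, $s-s'\not\equiv 0 \pmod{k}$ for all distinct $s,s'\in S_0$.

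\emph{The cosets of $H$.} Let $H=k\mathbb{Z}_n$ be the unique subgroup of order $d$ in $\mathbb{Z}_n$. I will show directly that $\mathbb{Z}_n=S_0\oplus H$, rather than relying only on the statement of Lemma~\ref{yl-2.5}, which only asserts that \emph{some} coset works.

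\begin{itemize}
\item $H-H=H$, and every element of $H$ is a multiple of $k$ in $\mathbb{Z}_n$.
\item Since $k\mid n$, congruence modulo $k$ is well defined on $\mathbb{Z}_n$.
\item If $s-s'\in H$ for $s,s'\in S_0$, then $s\equiv s'\pmod k$, which forces $s=s'$.
\end{itemize}

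Hence $(S_0-S_0)\cap(H-H)=\{0\}$. Together with $|S_0||H|=kd=n$, Proposition~\ref{pr-2.1} gives $\mathbb{Z}_n=S_0\oplus H$. By Lemma~\ref{yl-2.2}, $H$ is an efficient dominating set, and it is a subgroup, which finishes the equivalence.

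For an arbitrary coset $g+H$, the decomposition $\mathbb{Z}_n=S_0\oplus H$ translates to $\mathbb{Z}_n=g+\mathbb{Z}_n=S_0\oplus(g+H)$, since uniqueness of representations is preserved by translation. Lemma~\ref{yl-2.2} then shows every coset of $H$ is an efficient dominating set.

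The only step needing care is the claim that ``$\equiv 0\pmod{k}$'' for elements of $\mathbb{Z}_n$ matches membership in $H$. This holds because $k\mid n$, so the reduction $\mathbb{Z}_n\to\mathbb{Z}_k$ is a homomorphism with kernel exactly $k\mathbb{Z}_n=H$. The coprimality hypothesis is used only through Theorem~\ref{dl-3.2}.
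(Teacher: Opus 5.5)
Your proof is correct and takes essentially the paper's route: the paper obtains the corollary from Theorem~\ref{dl-3.2} together with Lemma~\ref{yl-2.5}, and your direct check that $\mathbb{Z}_n=S_0\oplus|S_0|\mathbb{Z}_n$ via Proposition~\ref{pr-2.1}, followed by translation to all cosets, is exactly the argument the paper writes out in part (i) of the proof of Theorem~\ref{dl-3.4}. Lemma~\ref{yl-2.5} alone would also have given the ``in particular'' part: any coset that is an efficient dominating set has size $n/|S_0|$, and $\mathbb{Z}_n$ has a unique subgroup of that order.
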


The following theorem gives a complete structural description of efficient dominating sets in $\Cay(\mathbb{Z}_n,S)$, where $|S|+1$ divides $n$ and is relatively prime to $\frac{n}{|S|+1}$.

\begin{theorem}\label{dl-3.4}
Let $S$ be an inverse-closed subset
of $\mathbb{Z}_n\setminus\{0\}$ such that $|S|+1$ divides $n$ and is relatively prime to $\frac{n}{|S|+1}$.
Suppose that
$\Cay(\mathbb{Z}_n,S)$ admits an efficient dominating set.  Then the
set $\mathbb{D}$ of all efficient dominating sets in
$\Cay(\mathbb{Z}_n,S)$ satisfies the following (i)-(ii):

(i) $\{i+|S_0|\mathbb{Z}_n\colon\,i=0,1,\cdots,|S_0|-1\}\subseteq
\mathbb{D}\subseteq\{\{i+k_i\frac{n}{|S_0|}\colon\,i=0,1,\cdots,\frac{n}{|S_0|}-1\}
\colon\,\forall\,k_i\in\{0,1,\cdots,|S_0|-1\}\};$

(ii) For any $D\in\mathbb{D},\,l\in\mathbb{Z}_n,$ and any integer
$k$ relatively prime to $|D|,$ the set $kD+l\in\mathbb{D}.$
\end{theorem}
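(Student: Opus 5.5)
The plan is to read off everything from Lemma~\ref{yl-2.2}, Corollary~\ref{co-3.3}, Theorem~\ref{dl-3.1} and Lemma~\ref{yl-2.4}. Throughout, write $m=\frac{n}{|S_0|}$. Any efficient dominating set $D$ satisfies $\mathbb{Z}_n=S_0\oplus D$ by Lemma~\ref{yl-2.2}, so $|D|=m$.

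For the left inclusion in (i), note that $|S_0|\mathbb{Z}_n$ is the unique subgroup of $\mathbb{Z}_n$ of order $\frac{n}{|S_0|}=m$. Its cosets are exactly the sets $i+|S_0|\mathbb{Z}_n$ for $i=0,1,\cdots,|S_0|-1$. Since $\Cay(\mathbb{Z}_n,S)$ is assumed to admit an efficient dominating set, the ``in particular'' part of Corollary~\ref{co-3.3} says that every such coset lies in $\mathbb{D}$.

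For the right inclusion in (i), take any $D\in\mathbb{D}$. Then $|D|=m$ divides $n$, and $(|D|,\frac{n}{|D|})=(m,|S_0|)=1$ by hypothesis. Also $D$ is proper, since $|S_0|\ge 2$ whenever $S\neq\emptyset$; the case $S=\emptyset$ is trivial and can be treated separately or excluded. So Theorem~\ref{dl-3.1} applies and gives $d-d'\not\equiv 0 \pmod m$ for distinct $d,d'\in D$. Hence the $m$ elements of $D$ lie in $m$ pairwise distinct residue classes modulo $m$, so each class $i\in\{0,1,\cdots,m-1\}$ contains exactly one element of $D$. Every element of $\mathbb{Z}_n$ congruent to $i$ modulo $m$ can be written uniquely as $i+k_im$ with $k_i\in\{0,1,\cdots,|S_0|-1\}$, because $n=|S_0|m$. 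This gives exactly the claimed form $D=\{i+k_i\frac{n}{|S_0|}\colon i=0,\cdots,\frac{n}{|S_0|}-1\}$.

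For (ii), let $D\in\mathbb{D}$, so $\mathbb{Z}_n=S_0\oplus D$ by Lemma~\ref{yl-2.2}. Apply Lemma~\ref{yl-2.4} with $M=D$ and $N=S_0$, using that $k$ is relatively prime to $|D|$. This gives $\mathbb{Z}_n=S_0\oplus kD$, which in particular forces $|kD|=|D|$. Translation preserves the direct sum: every $g$ has a unique representation $g-l=s+x$ with $s\in S_0$ and $x\in kD$, equivalently $g=s+(x+l)$. Hence $\mathbb{Z}_n=S_0\oplus(kD+l)$, and Lemma~\ref{yl-2.2} gives $kD+l\in\mathbb{D}$.

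There is no real obstacle here. The only points needing care are checking the hypotheses of Theorem~\ref{dl-3.1}, namely properness of $D$ and the coprimality $(m,|S_0|)=1$, and translating ``distinct residues mod $m$'' into the explicit parametrization with $k_i$.
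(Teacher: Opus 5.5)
Your proposal is correct and follows the paper's proof essentially step for step. You use Theorem~\ref{dl-3.1} plus counting residues modulo $\frac{n}{|S_0|}$ for the right inclusion of (i), and Lemma~\ref{yl-2.4} followed by translation for (ii). The only differences are minor: for the left inclusion you cite the ``in particular'' part of Corollary~\ref{co-3.3}, whereas the paper re-derives $\mathbb{Z}_n=S_0\oplus|S_0|\mathbb{Z}_n$ directly from Theorem~\ref{dl-3.2} and Proposition~\ref{pr-2.1}; and your explicit checks of properness and of $(|D|,|S_0|)=1$ before invoking Theorem~\ref{dl-3.1} are slightly more careful than the paper.
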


\begin{proof}
(i) If $\Cay(\mathbb{Z}_n,S)$ admits an efficient dominating set,
then by Theorem \ref{dl-3.2}, $s-s'\nequiv 0~(mod~|S_0|)$ for any
distinct $s,s'\in S_0,$ and thus
$(S_0-S_0)\cap(|S_0|\mathbb{Z}_n-|S_0|\mathbb{Z}_n)=\{0\}.$ Note
that the cardinality of $|S_0|\mathbb{Z}_n$ is $\frac{n}{|S_0|}.$
Therefore, by Proposition \ref{pr-2.1},
$\mathbb{Z}_n=S_0\oplus|S_0|\mathbb{Z}_n.$ By Lemma \ref{yl-2.2},
$|S_0|\mathbb{Z}_n\in\mathbb{D}.$ Since for each
$i\in\{0,1,\cdots,|S_0|-1\},$
$S_0+(i+|S_0|\mathbb{Z}_n)=i+(S_0+|S_0|\mathbb{Z}_n)=i+\mathbb{Z}_n=\mathbb{Z}_n$
and the cardinality of $i+|S_0|\mathbb{Z}_n$ is $\frac{n}{|S_0|},$
it follows from proposition \ref{pr-2.1} that
$\mathbb{Z}_n=S_0\oplus(i+|S_0|\mathbb{Z}_n),$ and thus
$i+|S_0|\mathbb{Z}_n\in\mathbb{D}.$ Therefore,
$\{i+|S_0|\mathbb{Z}_n\colon\,i=0,1,\cdots,|S_0|-1\}\subseteq
\mathbb{D}.$

On the other hand, for any $D\in \mathbb{D},$ by Theorem
\ref{dl-3.1}, $d\nequiv d'~(mod~|D|)$ for any distinct $d,d'\in D.$
Since $|D|=\frac{n}{|S_0|},$ we may write
$D=\{d_0,d_1,\cdots,d_{\frac{n}{|S_0|}-1}\}$ such that $d_i\equiv
i~(mod~\frac{n}{|S_0|})$ for each
$i\in\{0,1,\cdots,\frac{n}{|S_0|}-1\},$ and thus
$d_i=i+k_i\frac{n}{|S_0|},$ where $k_i\in\{0,1,\cdots,|S_0|-1\}.$
Hence
$\mathbb{D}\subseteq\{\{i+k_i\frac{n}{|S_0|}\colon\,i=0,1,\cdots,\frac{n}{|S_0|}-1\}
\colon\,\forall\,k_i\in\{0,1,\cdots,|S_0|-1\}\}.$

(ii) For any $D\in\mathbb{D},$ by Lemma \ref{yl-2.2},
$\mathbb{Z}_n=S_0\oplus D.$ Since $k$ is relatively prime to $|D|,$
it follows from Lemma \ref{yl-2.4} that $\mathbb{Z}_n=S_0\oplus kD,$
and thus $\mathbb{Z}_n=S_0\oplus (kD+l).$ Therefore, by Lemma
\ref{yl-2.2}, $kD+l\in \mathbb{D}.$
\end{proof}

The following theorem extends the previous results to a more general setting and reduces the efficient domination problem in $\mathbb Z_n$ to the corresponding problem in the quotient group.

\begin{theorem}\label{dl-3.5}
Let $S$ be an inverse-closed subset
of $\mathbb{Z}_n\setminus\{0\}$ such that $|S|+1$ divides $n$,  and let $m=(|S_0|,\frac{n}{|S_0|})$.
If $(\frac{|S_0|}{m},\frac{n}{|S_0|})=1$, $S_0$
is periodic with stability subgroup $A$ such that $|A|=m,$ set
$S_0/A=\{A,t_1+A,\cdots,t_{\frac{|S_0|}{m}-1}+A\}$ and
$T_0=\{t_0,t_1,\cdots,t_{\frac{|S_0|}{m}-1}\}$ where $t_0=0$ and
$1\leq t_i\leq \frac{n}{m}-1$ for
$i\in\{1,\cdots,\frac{|S_0|}{m}-1\},$ then $\Cay(\mathbb{Z}_n,S)$
admits an efficient dominating set if and only if $t-t'\nequiv
0~(mod~\frac{|S_0|}{m})$ for distinct $t,t'\in T_0.$
\end{theorem}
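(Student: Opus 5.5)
The plan is to reduce to the quotient group via Lemma~\ref{yl-2.3} and then apply Theorem~\ref{dl-3.2} (together with Lemma~\ref{yl-2.5}) in $\mathbb{Z}_n/A\cong\mathbb{Z}_{n/m}$. Since $|A|=m$, $A$ is the unique subgroup of order $m$, namely $\frac{n}{m}\mathbb{Z}_n$. Identify $\mathbb{Z}_n/A$ with $\mathbb{Z}_{n/m}$ via $x+A\mapsto x \bmod \frac nm$. Under this identification $S_0/A$ corresponds to $T_0$, read as a subset of $\mathbb{Z}_{n/m}$, and $|T_0|=|S_0|/m$. Also $T_0$ contains $0$ and is inverse-closed modulo $\frac nm$, because $S_0=-S_0$ is a union of $A$-cosets. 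So $T_0=T\cup\{0\}$ with $T$ an inverse-closed subset of $\mathbb{Z}_{n/m}\setminus\{0\}$. Moreover $\frac{n/m}{|T_0|}=\frac{n}{|S_0|}$, which is an integer and by hypothesis is relatively prime to $|T_0|=\frac{|S_0|}{m}$. Hence $\Cay(\mathbb{Z}_{n/m},T)$ satisfies the hypotheses of Theorem~\ref{dl-3.2}.

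For necessity, suppose $\mathbb{Z}_n=S_0\oplus D$. Lemma~\ref{yl-2.3} gives $\mathbb{Z}_n/A=S_0/A\oplus(D+A)/A$, i.e.\ $\mathbb{Z}_{n/m}=T_0\oplus\ovl D$. By Lemma~\ref{yl-2.2} and Theorem~\ref{dl-3.2}, the elements of $T_0$ are pairwise incongruent modulo $|T_0|=\frac{|S_0|}{m}$. This congruence condition is stated modulo $\frac{|S_0|}{m}$, which divides $\frac nm$. It is therefore well defined for representatives $t_i$ taken in $[0,\frac nm-1]$, as the statement does.

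For sufficiency, suppose the $t_i$ are pairwise incongruent modulo $\frac{|S_0|}{m}$. By Lemma~\ref{yl-2.5} (or Theorem~\ref{dl-3.4}(i)), $\mathbb{Z}_{n/m}=T_0\oplus\ovl H$, where $\ovl H=\frac{|S_0|}{m}\mathbb{Z}_{n/m}$ is the subgroup of order $\frac{n}{|S_0|}$. Now lift $\ovl H$ to a set $D\subseteq\mathbb{Z}_n$ that contains exactly one element from each $A$-coset lying over $\ovl H$, chosen so that $A\cap(D-D)=\{0\}$. Then $(D+A)/A=\ovl H$, and Lemma~\ref{yl-2.3} gives $\mathbb{Z}_n=S_0\oplus D$, so $D$ is an efficient dominating set by Lemma~\ref{yl-2.2}.

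The main obstacle is this lifting step, i.e.\ arranging $A\cap(D-D)=\{0\}$. Taking $D$ to be a transversal (one element per coset) of $A$ inside the preimage of $\ovl H$ does this automatically. If $d-d'\in A$, then $d$ and $d'$ lie in the same $A$-coset, and since $D$ meets each coset only once, $d=d'$. The preimage of $\ovl H$ is a subgroup of order $\frac{n}{|S_0|}\cdot m$, so such a transversal exists; we need not require $D$ to be a subgroup. The remaining care is bookkeeping: check that the isomorphism $\mathbb{Z}_n/A\cong\mathbb{Z}_{n/m}$ sends $t_i+A$ to $t_i \bmod \frac nm$, and that $|S_0/A|=|S_0|/m$ because $S_0$ is a union of $A$-cosets.
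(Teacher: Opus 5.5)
Your proof is correct and follows the paper's argument. You pass to $\mathbb{Z}_n/A\cong\mathbb{Z}_{n/m}$ via Lemma~\ref{yl-2.3} and apply Theorem~\ref{dl-3.2} to $T_0$ for necessity; for sufficiency you lift a quotient efficient dominating set by taking one representative per $A$-coset, which forces $A\cap(D-D)=\{0\}$. You are more explicit than the paper about the identification and about $T_0\setminus\{0\}$ being inverse-closed, and you lift the specific subgroup $\frac{|S_0|}{m}\mathbb{Z}_{n/m}$ rather than an arbitrary quotient efficient dominating set, but this is not a different route.
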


\begin{proof}
If $\Cay(\mathbb{Z}_n,S)$ admits an efficient dominating set $D,$
then by Lemmas \ref{yl-2.2} and \ref{yl-2.3},
$\mathbb{Z}_n=S_0\oplus D,$ and thus $\mathbb{Z}_n/A=S_0/A\oplus
(D+A)/A,\,A\cap(D-D)=\{0\},$ it follows that
$\Cay(\mathbb{Z}_n/A,S_0/A\setminus\{A\})$ admits an efficient
dominating set $(D+A)/A,$ that is,
$\Cay(\mathbb{Z}_{\frac{n}{m}},T_0\setminus\{0\})$ admits an
efficient dominating set. Since $|T_0|=\frac{|S_0|}{m}$ and
$\frac{\frac{n}{m}}{|T_0|}=\frac{n}{|S_0|},$ it follows that $|T_0|$
is relatively prime to $\frac{\frac{n}{m}}{|T_0|}.$ By Theorem
\ref{dl-3.2}, $t-t'\nequiv 0~(mod~\frac{|S_0|}{m})$ for any distinct
$t,t'\in T_0.$

If $t-t'\nequiv 0~(mod~\frac{|S_0|}{m})$ for any distinct $t,t'\in
T_0,$ then by Theorem \ref{dl-3.2},
$\Cay(\mathbb{Z}_{\frac{n}{m}},T_0\setminus\{0\})$ admits an
efficient dominating set, that is,
$\Cay(\mathbb{Z}_n/A,S_0/A\setminus\{A\})$ admits an efficient
dominating set, denoted by $(D+A)/A.$ Set
$(D+A)/A=\{d_1+A,\cdots,d_{\frac{n}{|S_0|}}+A\}$ and set
$D=\{d_1,\cdots,d_{\frac{n}{|S_0|}}\}.$ By Lemma \ref{yl-2.2},
$\mathbb{Z}_n/A=S_0/A\oplus (D+A)/A.$ Since
$|(D+A)/A|=|D|=\frac{n}{|S_0|},$ it follows that $A\cap(D-D)=\{0\}.$
By Lemma \ref{yl-2.3}, $\mathbb{Z}_n=S_0\oplus D.$ Therefore, by
Lemma \ref{yl-2.2}, $\Cay(\mathbb{Z}_n,S)$ admits an efficient
dominating set $D.$
\end{proof}

\section{Efficient open dominating sets}

Now we turn to study efficient open dominating sets in circulant graphs.

\begin{theorem}\label{dl-3.6}
Let $D$ be a proper subset of $\mathbb{Z}_n$ such that $|D|$ divides
$n$ and is relatively prime to $\frac{n}{|D|}.$ Then $D$ is an
efficient open dominating set in some circulant graph
$\Cay(\mathbb{Z}_n,S)$ if and only if $|D|$ is even and
$d-d'\nequiv 0~(mod~|D|)$ for any distinct $d,d'\in D.$
\end{theorem}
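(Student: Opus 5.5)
The plan mirrors the proof of Theorem~\ref{dl-3.1}, with Lemma~\ref{yl-2.6} in place of Lemma~\ref{yl-2.2}. The one new ingredient is in the sufficiency direction. The connection set can no longer be a subgroup, since $0\notin S$ but now $S$ itself, not $S\cup\{0\}$, must be a direct summand. So I would use a suitable translate of a subgroup, and evenness of $|D|$ is exactly what makes such a translate inverse-closed.

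\emph{Necessity.} Suppose $D$ is an efficient open dominating set of $\Cay(\mathbb{Z}_n,S)$.
\begin{itemize}
\item[(a)] Evenness of $|D|$ follows from the remark in the introduction. Each $d\in D$ is adjacent to exactly one vertex $d^*\in D$. We have $d^*\neq d$ because $0\notin S$, and $(d^*)^*=d$. Hence $d\mapsto d^*$ is a fixed-point-free involution on $D$, that is, a perfect matching, and $|D|$ is even.
\item[(b)] By Lemma~\ref{yl-2.6}, $\mathbb{Z}_n=S\oplus D$, so $|S|=k:=\frac{n}{|D|}$, and $k$ is coprime to $|D|$. Lemma~\ref{yl-2.4} (applied with $D$ in the role of $M$) gives $\mathbb{Z}_n=S\oplus kD$, hence $|kD|=|D|$.
\item[(c)] If distinct $d,d'\in D$ had $d\equiv d' \pmod{|D|}$, then $kd\equiv kd' \pmod{k|D|}$, i.e.\ $kd=kd'$ in $\mathbb{Z}_n$. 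Then $|kD|<|D|$, a contradiction.
\end{itemize}

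\emph{Sufficiency.} Suppose $|D|$ is even and the elements of $D$ are pairwise incongruent modulo $|D|$.
\begin{itemize}
\item[(a)] Let $H=|D|\mathbb{Z}_n$, the subgroup of order $\frac{n}{|D|}$. Exactly as in Theorem~\ref{dl-3.1}, $(H-H)\cap(D-D)=H\cap(D-D)=\{0\}$ and $|H||D|=n$, so Proposition~\ref{pr-2.1} gives $\mathbb{Z}_n=H\oplus D$.
\item[(b)] Put $c=\frac{|D|}{2}$ and $S=c+H$. Translating preserves direct sums, so $\mathbb{Z}_n=S\oplus D$.
\item[(c)] $S$ is inverse-closed: $-(c+h)=c+(-2c-h)=c+(-|D|-h)\in c+H$.
\item[(d)] $0\notin S$. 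Since $|D|\mid n$, congruence modulo $|D|$ is well defined on $\mathbb{Z}_n$, and every element of $H$ is $\equiv 0 \pmod{|D|}$. Every element of $S$ is therefore $\equiv \frac{|D|}{2}\not\equiv 0 \pmod{|D|}$.
\item[(e)] Thus $\Cay(\mathbb{Z}_n,S)$ is a circulant graph, and Lemma~\ref{yl-2.6} shows that $D$ is an efficient open dominating set of it.
\end{itemize}

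The main obstacle is choosing $S$ in the sufficiency direction so that it is simultaneously inverse-closed, avoids $0$, and is complementary to $D$. The coset $\frac{|D|}{2}+|D|\mathbb{Z}_n$ handles all three requirements at once, and checking (c) and (d) above is the only place the parity hypothesis is used. The rest is routine bookkeeping with Proposition~\ref{pr-2.1} and Lemmas~\ref{yl-2.4} and~\ref{yl-2.6}.
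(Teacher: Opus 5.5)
Your proposal is correct and follows the paper's proof. For necessity you use the matching argument together with Lemma~\ref{yl-2.4}, exactly as in Theorem~\ref{dl-3.1}; for sufficiency you take $S$ to be a coset of $|D|\mathbb{Z}_n$ and apply Proposition~\ref{pr-2.1} and Lemma~\ref{yl-2.6}. The paper writes this coset as $\frac{n}{2}+|D|\mathbb{Z}_n$, which is the same set as your $\frac{|D|}{2}+|D|\mathbb{Z}_n$ because $\frac{n}{|D|}$ is odd here; its representative makes inverse-closure automatic but needs coprimality to exclude $0$, while your residue argument shows sufficiency holds even without the coprimality hypothesis.
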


\begin{proof}
If $|D|$ is even and $d-d'\nequiv 0~(mod~|D|)$ for any distinct
$d,d'\in D,$ then
$((\frac{n}{2}+|D|\mathbb{Z}_n)-(\frac{n}{2}+|D|\mathbb{Z}_n))\cap(D-D)=\{0\}.$
Note that the cardinality of $\frac{n}{2}+|D|\mathbb{Z}_n$ is
$\frac{n}{|D|}.$ Therefore, by Proposition~\ref{pr-2.1},
$\mathbb{Z}_n=(\frac{n}{2}+|D|\mathbb{Z}_n)\oplus D.$ Set
$S=\frac{n}{2}+|D|\mathbb{Z}_n.$ Then $S$ is inverse-closed and
$\mathbb{Z}_n=S\oplus D.$ Next we claim that $0\notin S.$ Otherwise,
if $0\in S,$ then there exists some
$k\in\{0,1,\cdots,\frac{n}{|D|}-1\}$ such that $0=\frac{n}{2}+k|D|,$
and thus $k|D|=\frac{n}{2},$ which implies that $\frac{n}{|D|}=2k.$
Therefore, $|D|$ is not relatively prime to $\frac{n}{|D|},$ which
is a contradiction. By Lemma \ref{yl-2.2}, $D$ is an efficient open
dominating set in the circulant graph $\Cay(\mathbb{Z}_n,S).$

If $D$ is an efficient open dominating set in
$\Cay(\mathbb{Z}_n,S),$ then by the definition of an efficient open
dominating set, it is easy to see that $D$ induces a matching in
$\Cay(\mathbb{Z}_n,S),$ and thus $|D|$ is even. Moreover, if $D$ is
an efficient open dominating set in $\Cay(\mathbb{Z}_n,S),$ then by
Lemma \ref{yl-2.6}, $\mathbb{Z}_n=S\oplus D.$ Similar to the proof
of the `only if' part in Theorem \ref{dl-3.1}, we can obtain that
$d-d'\nequiv 0~(mod~|D|)$ for any distinct $d,d'\in D.$
\end{proof}

Using Lemmas \ref{yl-2.6} and \ref{yl-2.7} and following the proof
of Theorem \ref{dl-3.2} but with $S_0$ replaced by $S,$ one can
prove the following result, which also implies \cite[Theorem~1.4]{Feng} and \cite[Theorems~5.4--5.6]{Cam}.

\begin{theorem}\label{dl-3.7}
Let $S$ be an inverse-closed subset
of $\mathbb{Z}_n\setminus\{0\}$ such that $|S|$ divides $n$ and
$|S|$ is relatively prime to $\frac{n}{|S|}.$ Then
$\Cay(\mathbb{Z}_n,S)$ admits an efficient open dominating set if
and only if $s-s'\nequiv 0~(mod~|S|)$ for distinct $s,s'\in S.$
\end{theorem}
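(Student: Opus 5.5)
The plan is to mirror the proof of Theorem~\ref{dl-3.2}, with $S_0$ replaced by $S$ throughout, Lemma~\ref{yl-2.2} replaced by Lemma~\ref{yl-2.6}, and Lemma~\ref{yl-2.5} replaced by Lemma~\ref{yl-2.7}.

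For sufficiency, assume $s-s'\nequiv 0~(mod~|S|)$ for all distinct $s,s'\in S$. Write $n=|S|d$ with $d=\frac{n}{|S|}$. Lemma~\ref{yl-2.7} then directly gives an efficient open dominating set of $\Cay(\mathbb{Z}_n,S)$, namely a coset of a subgroup of $\mathbb{Z}_n$. If we want this to be self-contained, we can argue as in Theorem~\ref{dl-3.4}(i). The hypothesis means $(S-S)\cap(|S|\mathbb{Z}_n-|S|\mathbb{Z}_n)=\{0\}$. Since $|S|\cdot\bigl||S|\mathbb{Z}_n\bigr|=|S|\cdot\frac{n}{|S|}=n$, Proposition~\ref{pr-2.1} yields $\mathbb{Z}_n=S\oplus|S|\mathbb{Z}_n$. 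Lemma~\ref{yl-2.6} then shows that $|S|\mathbb{Z}_n$ is an efficient open dominating set.

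For necessity, suppose $D$ is an efficient open dominating set. By Lemma~\ref{yl-2.6}, $\mathbb{Z}_n=S\oplus D$, so $n=|S||D|$. Put $k=|D|=\frac{n}{|S|}$; by hypothesis $(k,|S|)=1$. Lemma~\ref{yl-2.4}, applied with $M=S$ and $N=D$, gives $\mathbb{Z}_n=kS\oplus D$, and therefore $|kS|=\frac{n}{|D|}=|S|$. Now suppose, for contradiction, that distinct $s,s'\in S$ satisfy $s\equiv s'~(mod~|S|)$. Multiplying by $k$ gives $ks\equiv ks'~(mod~k|S|)$, that is, modulo $n$. Hence $ks=ks'$ in $\mathbb{Z}_n$, so $|kS|<|S|$, which is a contradiction.

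The only point needing care is the congruence $s\equiv s'~(mod~|S|)$ for elements of $\mathbb{Z}_n$. It is well defined because $|S|$ divides $n$, so multiplying by $k$ correctly transports it to a congruence modulo $n$. The rest is a direct transcription of the proof of Theorem~\ref{dl-3.2}, so I do not expect a genuine obstacle. In particular, the even-order condition on $|D|$ from Theorem~\ref{dl-3.6} is not needed here.
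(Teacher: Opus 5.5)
Your proposal is correct and is the paper's argument: the paper proves this theorem by running the proof of Theorem~\ref{dl-3.2} with $S_0$ replaced by $S$. It uses Lemma~\ref{yl-2.7} for sufficiency, and Lemmas~\ref{yl-2.6} and~\ref{yl-2.4} together with the argument that $ks=ks'$ in $\mathbb{Z}_n$ (forcing $|kS|<|S|$) for necessity. Your optional self-contained sufficiency step via Proposition~\ref{pr-2.1} just unpacks Lemma~\ref{yl-2.7}, exactly as in Theorem~\ref{dl-3.4}(i), and is also fine.
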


By Lemma \ref{yl-2.7} and Theorem \ref{dl-3.7}, one can easily
observe the following result.

\begin{corollary} \label{co-3.8}
Let $S$ be an inverse-closed
subset of $\mathbb{Z}_n\setminus\{0\}$ such that $|S|$ divides $n$
and $|S|$ is relatively prime to $\frac{n}{|S|}.$ Then
$\Cay(\mathbb{Z}_n,S)$ either doesn't admit any efficient open
dominating set or admits an efficient open dominating set which is a
subgroup of $\mathbb{Z}_n.$
\end{corollary}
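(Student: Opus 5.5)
The plan is to combine Lemma~\ref{yl-2.7} with Theorem~\ref{dl-3.7}, exactly as Corollary~\ref{co-3.3} was deduced from Lemma~\ref{yl-2.5} and Theorem~\ref{dl-3.2}. Suppose $\Cay(\mathbb{Z}_n,S)$ admits an efficient open dominating set; otherwise the first alternative holds and there is nothing to prove. Then Theorem~\ref{dl-3.7} applies, since its hypotheses are precisely those of the corollary, and it gives $s-s'\nequiv 0~(mod~|S|)$ for all distinct $s,s'\in S$.

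Next I exhibit the subgroup directly, rather than only citing Lemma~\ref{yl-2.7}, because that lemma as stated produces a \emph{coset} and the corollary asks for a subgroup. Put $H=|S|\mathbb{Z}_n$, the unique subgroup of order $\frac{n}{|S|}$. Every difference of two elements of $H$ is a multiple of $|S|$ modulo $n$. Since $|S|$ divides $n$, reduction mod $|S|$ is well defined on $\mathbb{Z}_n$, so $(S-S)\cap(H-H)=\{0\}$ by the congruence condition. Moreover $|S||H|=n$. Proposition~\ref{pr-2.1}, using conditions (ii) and (iii), then gives $\mathbb{Z}_n=S\oplus H$, and Lemma~\ref{yl-2.6} shows that $H$ is an efficient open dominating set. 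This subgroup is the required one.

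The only point needing care is that $H$ is an honest subgroup and not merely a coset. This is automatic here, because the direct-sum argument works for $H$ itself with no translation needed. Equivalently, one could take any coset $g+H$ supplied by Lemma~\ref{yl-2.7} and translate by $-g$: if $\mathbb{Z}_n=S\oplus D$, then $\mathbb{Z}_n=S\oplus(D+l)$ for every $l\in\mathbb{Z}_n$, so translates of efficient open dominating sets are again efficient open dominating sets.

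As a consistency check, $|H|=\frac{n}{|S|}$ must then be even. Because $|S|$ is relatively prime to $\frac{n}{|S|}$, this forces $|S|$ to be odd. This agrees with Theorem~\ref{dl-3.6} and needs no separate proof. I expect no real obstacle beyond stating the direct-sum verification cleanly.
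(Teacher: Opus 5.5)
Your proof is correct and follows the paper's route: Theorem~\ref{dl-3.7} gives the congruence condition on $S$, and then $\mathbb{Z}_n=S\oplus|S|\mathbb{Z}_n$ by Proposition~\ref{pr-2.1}, so Lemma~\ref{yl-2.6} makes the subgroup $|S|\mathbb{Z}_n$ an efficient open dominating set; this is the content of Lemma~\ref{yl-2.7}. Your explicit handling of the coset-versus-subgroup point, either directly or by translating, spells out what the paper leaves as ``easily observed.''
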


Note that every finite abelian group is a direct product of some cyclic
groups. By Lemma~\ref{yl-2.6}, we can obtain the following
corollary which generalizes the result of Theorem~\ref{dl-3.7} about
circulant graphs by giving a similar result about Cayley graphs on
abelian groups.

\begin{corollary} \label{co-3.8}
Let \G be an abelian group and write
$G=\mathbb{Z}_{n_1}\times\cdots\times\mathbb{Z}_{n_k},$ where $k$
and $n_1,\cdots,n_k$ are positive integers, let
$S=S_1\times\cdots\times S_k,$ where $S_i$ is an inverse-closed
subset of $\mathbb{Z}_{n_i}\setminus\{0\}$ such that $|S_i|$ divides
$n_i$ and $|S_i|$ is relatively prime to $\frac{n_i}{|S_i|}$ for
each $i\in\{1,\cdots,k\}.$ Then $\Cay(G,S)$ admits an efficient open
dominating set if and only if $s_i-s_i'\nequiv 0~(mod~|S_i|)$ for distinct $s_i,s_i'\in S_i$ and each $i\in\{1,\cdots,k\}.$
\end{corollary}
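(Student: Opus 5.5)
The plan is to prove the two directions separately. Sufficiency reduces to the cyclic case coordinatewise. Necessity factors off one coordinate of $S$ and then reuses the multiplier argument from the proof of Theorem~\ref{dl-3.2}. First note that $S$ is inverse-closed, since each $S_i$ is. Also $0\notin S$, because every element of $S$ has a nonzero $i$-th coordinate for every $i$. So $\Cay(G,S)$ is a Cayley graph, and $|S|=\prod_i|S_i|$.

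\emph{Sufficiency.} Assume that for each $i$, $s_i-s_i'\nequiv 0~(mod~|S_i|)$ for distinct $s_i,s_i'\in S_i$. By Theorem~\ref{dl-3.7}, each $\Cay(\mathbb{Z}_{n_i},S_i)$ admits an efficient open dominating set $D_i$. By Lemma~\ref{yl-2.6}, $\mathbb{Z}_{n_i}=S_i\oplus D_i$. Put $D=D_1\times\cdots\times D_k$. Take any $g=(g_1,\dots,g_k)\in G$. Each $g_i$ has a unique representation $g_i=s_i+d_i$ with $s_i\in S_i$ and $d_i\in D_i$. Hence $g$ has the unique representation $g=(s_1,\dots,s_k)+(d_1,\dots,d_k)$ with the first summand in $S$ and the second in $D$. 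So $G=S\oplus D$, and Lemma~\ref{yl-2.6} shows that $D$ is an efficient open dominating set of $\Cay(G,S)$.

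\emph{Necessity.} Suppose $D$ is an efficient open dominating set, so $G=S\oplus D$ by Lemma~\ref{yl-2.6}. Fix $i$, say $i=1$ after reordering coordinates. Write $G=\mathbb{Z}_{n_1}\times H$ with $H=\mathbb{Z}_{n_2}\times\cdots\times\mathbb{Z}_{n_k}$, and $S'=S_2\times\cdots\times S_k\subseteq H$. Then
$$S=(S_1\times\{0\})\oplus(\{0\}\times S'),$$
and this sum is direct. By associativity of direct sums, $G=(S_1\times\{0\})\oplus E$, where $E=(\{0\}\times S')+D$. Here $E$ is itself a direct sum, so $|E|=|G|/|S_1|$.

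Now let $m=\frac{n_1}{|S_1|}$. By hypothesis $m$ is relatively prime to $|S_1|=|S_1\times\{0\}|$. Lemma~\ref{yl-2.4} therefore gives $G=m(S_1\times\{0\})\oplus E$, so $|mS_1|=|S_1|$ in $\mathbb{Z}_{n_1}$. If there were distinct $s,s'\in S_1$ with $s-s'\equiv0~(mod~|S_1|)$, then $ms-ms'\equiv 0~(mod~n_1)$. That would make $|mS_1|<|S_1|$, a contradiction. This gives the condition for $S_1$, and by symmetry for every $S_i$.

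The main obstacle is the necessity direction. Applying Lemma~\ref{yl-2.4} to $S$ directly fails, because a multiplier coprime to $|S_1|$ need not be coprime to the other $|S_j|$. The fix is the factorisation $G=(S_1\times\{0\})\oplus E$, which isolates one factor at a time, and the key check there is that $E$ is a genuine direct sum.
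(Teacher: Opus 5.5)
Your proof is correct. The sufficiency half is the product construction $D=D_1\times\cdots\times D_k$ that the paper's one-line justification (``by Lemma~\ref{yl-2.6}'') relies on. For necessity the paper gives no argument, so your route there is genuinely your own. You split off $M=S_1\times\{0\}$, verify $G=M\oplus E$ with $E=(\{0\}\times S')+D$, and rerun the multiplier argument of Theorem~\ref{dl-3.2} inside $G$ with $k=n_1/|S_1|$. You are right that Lemma~\ref{yl-2.4} cannot be applied to all of $S$.

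A more modular way to close the same gap is to restrict $G=M\oplus E$ to the subgroup $H=\mathbb{Z}_{n_1}\times\{0\}$. Since $M\subseteq H$, one gets $H=M\oplus(E\cap H)$, so $S_1$ tiles $\mathbb{Z}_{n_1}$. By Lemma~\ref{yl-2.6}, $\Cay(\mathbb{Z}_{n_1},S_1)$ then has an efficient open dominating set, and Theorem~\ref{dl-3.7} finishes.

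That version gives a stronger intermediate fact: $\Cay(G,S)$ has an efficient open dominating set if and only if every $\Cay(\mathbb{Z}_{n_i},S_i)$ does. This equivalence needs no arithmetic hypothesis, and the route uses Lemma~\ref{yl-2.4} only in cyclic groups. Your version skips the restriction step but invokes Lemma~\ref{yl-2.4} in the non-cyclic group $G$. That is legitimate, since the paper states the lemma for all finite abelian groups.
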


Using Lemmas~\ref{yl-2.4} and \ref{yl-2.6} and following the proof
of Theorem~\ref{dl-3.4} but with $S_0$ replaced by $S$, we  can prove the following result.

\begin{theorem}\label{dl-3.11}
Suppose that $\Cay(\mathbb{Z}_n,S)$
admits an efficient open dominating set. Then the set $\mathbb{D}$
of all efficient open dominating sets of $\Cay(\mathbb{Z}_n,S)$
satisfies the following (i)-(ii):

(i) $\{i+|S|\mathbb{Z}_n\colon\,i=0,1,\cdots,|S|-1\}\subseteq
\mathbb{D}\subseteq\{\{i+k_i\frac{n}{|S|}\colon\,i=0,1,\cdots,\frac{n}{|S|}-1\}
\colon\,\forall\,k_i{\in}\{0,1,\cdots,|S|{-}1\}\};$

(ii) For any $D\in\mathbb{D},\,l\in\mathbb{Z}_n,$ and any integer
$k$ relatively prime to $|D|,$ the set $kD+l\in\mathbb{D}.$
\end{theorem}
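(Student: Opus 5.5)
The plan is to transcribe the proof of Theorem~\ref{dl-3.4}, replacing $S_0$ by $S$, Lemma~\ref{yl-2.2} by Lemma~\ref{yl-2.6}, Theorem~\ref{dl-3.2} by Theorem~\ref{dl-3.7}, and Theorem~\ref{dl-3.1} by Theorem~\ref{dl-3.6}. Both inclusions in (i) rely on these coprimality-dependent results. I will therefore work under the standing hypothesis of Theorem~\ref{dl-3.7}: $S$ is inverse-closed, $|S|$ divides $n$, and $(|S|,\frac{n}{|S|})=1$. This is implicit in the phrase ``following the proof of Theorem~\ref{dl-3.4}'', and I would state it explicitly. Part (ii), by contrast, needs no coprimality.

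For the left inclusion in (i), I first use Theorem~\ref{dl-3.7}. Since the graph admits an efficient open dominating set, $s\not\equiv s'~(mod~|S|)$ for distinct $s,s'\in S$. This gives $(S-S)\cap(|S|\mathbb{Z}_n-|S|\mathbb{Z}_n)=\{0\}$, because $|S|\mathbb{Z}_n-|S|\mathbb{Z}_n=|S|\mathbb{Z}_n$ consists of multiples of $|S|$ modulo $n$, and $|S|\mid n$. Also $|S|\mathbb{Z}_n$ has order $\frac{n}{|S|}$. Proposition~\ref{pr-2.1} (conditions (ii) and (iii)) then yields $\mathbb{Z}_n=S\oplus|S|\mathbb{Z}_n$. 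Translating by $i$ gives $S+(i+|S|\mathbb{Z}_n)=\mathbb{Z}_n$ with the correct cardinality, so by Proposition~\ref{pr-2.1} again and Lemma~\ref{yl-2.6}, each coset $i+|S|\mathbb{Z}_n$ lies in $\mathbb{D}$.

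For the right inclusion in (i), take $D\in\mathbb{D}$. Then $|D|=\frac{n}{|S|}$, which divides $n$ and is coprime to $|S|=\frac{n}{|D|}$. Theorem~\ref{dl-3.6} then gives that the elements of $D$ are pairwise incongruent modulo $|D|$. Equivalently, the only-if argument of Theorem~\ref{dl-3.1} can be run directly: Lemma~\ref{yl-2.4} with multiplier $|S|$ applied to $\mathbb{Z}_n=D\oplus S$ forces $|\,|S|D\,|=|D|$. Since $|D|$ elements are pairwise incongruent modulo $|D|$, they hit each residue class exactly once. So I write $d_i=i+k_i\frac{n}{|S|}$ with $0\le i<\frac{n}{|S|}$ and $0\le k_i<|S|$, which is the claimed form.

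For (ii), take $D\in\mathbb{D}$, so $\mathbb{Z}_n=D\oplus S$ by Lemma~\ref{yl-2.6}. Applying Lemma~\ref{yl-2.4} with $M=D$ and $k$ coprime to $|D|$ gives $\mathbb{Z}_n=kD\oplus S$. Translation by $l$ preserves direct sums, and Lemma~\ref{yl-2.6} then gives $kD+l\in\mathbb{D}$. I expect no real obstacle here.

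The main obstacle is the hypothesis issue in (i). Without coprimality, the congruence conditions from Theorems~\ref{dl-3.6} and~\ref{dl-3.7} are unavailable, so the statement must be read with the assumptions of Theorem~\ref{dl-3.7}. With those assumptions, each step is the routine substitution described above.
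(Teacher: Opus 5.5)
Your proposal is correct and follows the route the paper indicates: transcribe the proof of Theorem~\ref{dl-3.4} with $S_0$ replaced by $S$, using Lemmas~\ref{yl-2.4} and~\ref{yl-2.6} together with the open-domination analogues Theorems~\ref{dl-3.6} and~\ref{dl-3.7}. You are also right that (i) needs the coprimality hypothesis of Theorem~\ref{dl-3.7}, which the statement omits (e.g.\ $\{0,1\}$ is an efficient open dominating set of $\Cay(\mathbb{Z}_4,\{1,3\})$ but $2\mathbb{Z}_4=\{0,2\}$ is not), while (ii) holds without it.
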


\section*{Conflict of interest}

The authors declare that there are no conflicts of interest.

\end{document}